\theoremstyle{definition}
\newtheorem{Thm}{Theorem}
\newtheorem{Lem}{Lemma}
\newtheorem{Rem}{Remark}
\newtheorem{Pro}{Proposition}
\numberwithin{equation}{section}
\newcommand{\Vol}{{\text{Vol}}}
\def\RR{{\bold R}}
\newcommand{\e}{{\text {e}}}
\newcommand{\cH}{{\mathcal{H}}}
\newcommand{\eqr}[1]{(\ref{#1})}
\title[Nodal sets]
{Lower bounds for nodal sets of eigenfunctions}
\author{Tobias H. Colding}%
\address{MIT, Dept. of Math.\\
77 Massachusetts Avenue, Cambridge, MA 02139-4307.}
\author{William P. Minicozzi II}%
\address{Johns Hopkins University\\
Dept. of Math.\\
3400 N. Charles St.\\
Baltimore, MD 21218}
\thanks{The   authors
were partially supported by NSF Grants DMS  0606629, DMS
 0906233,  and NSF FRG grants DMS 
 0854774 and DMS 0853501}
\email{colding@math.mit.edu  and minicozz@math.jhu.edu}
\begin{document}

\maketitle

\section{Introduction}

Let $M$ be a smooth closed Riemannian manifold and $\Delta$ the Laplace operator.  A function $u$ is said to be an eigenfunction with eigenvalue $\lambda$ if
\begin{equation}
	\Delta u = - \lambda \, u \, .
\end{equation}
With our convention on the sign of $\Delta$, the eigenvalues are non-negative and go to infinity.

One of the most fundamental questions about eigenfunctions is to understand the sets where they vanish; these sets are called {\emph{nodal sets}}.     Nodal sets   are $(n-1)$-dimensional manifolds away from   $(n-2)$-dimensional singular sets   where the gradient also vanishes, so it is natural to estimate the $(n-1)$-dimensional Hausdorff measure $\cH^{n-1}$.

   \vskip2mm
   The main result of this short note is the following lower bound:
   
   \begin{Thm}	\label{t:main}
   Given a closed $n$-dimensional Riemannian manifold $M$, there exists $C$ so that 
   \begin{equation}
   	\cH^{n-1} (\{ u = 0 \}) \geq C \, \lambda^{\frac{3-n}{4}} \, .
   \end{equation}
   \end{Thm}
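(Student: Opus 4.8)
The plan is to reduce the problem to an estimate for $|\nabla u|$ on the nodal set $N=\{u=0\}$, making essential use of the eigenvalue equation.

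First I would record the basic divergence identity. Away from the closed set $\{u=0=|\nabla u|\}$, which has Hausdorff dimension at most $n-2$ and hence vanishing $\cH^{n-1}$-measure, $N$ is a smooth hypersurface, and in the distributional sense the Laplacian of $|u|$ equals $-\lambda\,|u|$ plus $2|\nabla u|$ times the surface measure on $N$ (off $N$, $\Delta|u|=\mathrm{sgn}(u)\,\Delta u=-\lambda|u|$; across $N$ the normal derivative of $|u|$ jumps by $2|\nabla u|$). Integrating over the closed manifold $M$ (so that the left side integrates to $0$) gives the clean identity
$$\lambda\int_M |u| \;=\; 2\int_N |\nabla u|\,d\cH^{n-1}.$$
Since $N$ does not change when $u$ is rescaled, I normalize $\int_M u^2=1$. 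Then Sogge's eigenfunction bound $\|u\|_{L^\infty}\le C\lambda^{(n-1)/4}\|u\|_{L^2}$, combined with $1=\int u^2\le\|u\|_{L^1}\|u\|_{L^\infty}$, yields $\int_M|u|\ge c\lambda^{-(n-1)/4}$, so
$$\int_N |\nabla u|\,d\cH^{n-1}\;\ge\; c\,\lambda^{\frac{5-n}{4}}.$$

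The heart of the matter is then to prove the matching upper bound
$$\int_N |\nabla u|\,d\cH^{n-1}\;\le\;C\,\sqrt{\lambda}\;\cH^{n-1}(N),$$
since dividing by $C\sqrt\lambda$ and inserting the lower bound above produces $\cH^{n-1}(N)\ge c'\lambda^{\frac{3-n}{4}}$. To attack this I would work at the wavelength scale $r=\lambda^{-1/2}$: after rescaling, $v(y)=u(x+ry)$ solves $\Delta v+v=0$ on a unit ball, so interior elliptic estimates give $|\nabla u(x)|\le C\sqrt\lambda\,\sup_{B_r(x)}|u|$ and $\sup_{B_r(x)}|u|^2\le C\lambda^{n/2}\int_{B_{2r}(x)}u^2$, and the piece of $N$ inside such a ball is controlled by a local nodal-set bound. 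Covering $M$ by balls of radius $r$, estimating the resulting weighted sum of $\cH^{n-1}(N\cap B)$ by Cauchy--Schwarz (or by a layer-cake decomposition over the superlevel sets of $|\nabla u|$ on $N$), one reduces matters to the statement that the total mass $\int_M u^2=1$ strictly limits how much of $N$, and of $M$, can lie near the concentration set of $u$; a Rellich--Pohozaev identity on the nodal domains — on each domain $\Omega$ it reads $\int_{\partial\Omega}|\nabla u|^2\,(x\cdot\nu)=2\lambda\int_\Omega u^2$ in normal coordinates — is a natural device for making this quantitative, as is the inradius bound $\mathrm{inrad}(\Omega)\le C\lambda^{-1/2}$ for nodal domains.

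The main obstacle is exactly this last inequality, which is essentially equivalent to the theorem and carries all of its difficulty. Pointwise on $N$ the gradient can be as large as $\lambda^{(n+1)/4}$ — this occurs precisely next to points where $u$ is of maximal size $\sim\lambda^{(n-1)/4}$ — so the crude estimate $\int_N|\nabla u|\le\|\nabla u\|_{L^\infty}\cH^{n-1}(N)$ is far too wasteful and only gives the weaker bound $\cH^{n-1}(N)\ge c\lambda^{(2-n)/2}$. One must show that such large gradients are confined to a correspondingly small portion of $N$; this is where the $L^2$-normalization, Sogge's $L^p$ estimates, and local (doubling-type) control of the nodal set must be combined carefully, and squeezing the exponent from $\tfrac{2-n}{2}$ down to $\tfrac{3-n}{4}$ is the whole point of the argument.
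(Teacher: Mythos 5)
Your proposal has a genuine gap, and in fact you acknowledge it yourself at the end: the entire argument is reduced to the unproven ``matching upper bound'' $\int_N|\nabla u|\,d\cH^{n-1}\le C\sqrt\lambda\;\cH^{n-1}(N)$, and that bound is at least as hard as the theorem. The pointwise estimate $|\nabla u|\lesssim \lambda^{(n+1)/4}$ on $N$ is essentially sharp, so the claimed averaged bound $\sqrt\lambda$ requires showing that the high-gradient region occupies only a $\lambda^{-(n-1)/4}$--fraction of $N$; proving that is exactly what you would need, and you offer only a list of plausible tools (Rellich--Pohozaev, inradius bounds, covering arguments, doubling control) without assembling them into an argument. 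The reduction via the Sogge--Zelditch identity $\lambda\int_M|u|=2\int_N|\nabla u|$ and the $L^1$ lower bound $\int_M|u|\gtrsim\lambda^{-(n-1)/4}$ is correct arithmetic, but it moves the difficulty rather than resolving it. As you note, feeding the naive pointwise gradient bound into this scheme only recovers $\cH^{n-1}(N)\gtrsim\lambda^{(2-n)/2}$, which is weaker than both the paper's result and even Sogge--Zelditch's $\lambda^{(7-3n)/8}$.

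It is also worth saying that the paper proceeds along an entirely different route and never touches the nodal-gradient integral. The actual proof (i) covers $M$ by $\sim\lambda^{n/2}$ balls of radius $r\sim\lambda^{-1/2}$ with bounded overlap, (ii) singles out the ``$d$-good'' balls $B$ satisfying the doubling bound $\int_{2B}u^2\le 2^d\int_B u^2$, (iii) shows via Sogge's $L^p$ estimate at the critical exponent $p=\tfrac{2(n+1)}{n-1}$ that the set where $\int u^2$ concentrates has volume $\gtrsim\lambda^{(1-n)/4}$, hence there are at least $\sim\lambda^{(n+1)/4}$ good balls, and (iv) proves a purely local statement: on any good ball containing a zero of $u$, a mean-value inequality plus the vanishing-average lemma plus the isoperimetric inequality force $\cH^{n-1}(N\cap B)\gtrsim r^{n-1}$. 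Summing over the good balls gives $\lambda^{(n+1)/4}\cdot\lambda^{-(n-1)/2}=\lambda^{(3-n)/4}$. The point of the doubling/good-ball mechanism is precisely to sidestep the averaged-gradient estimate you were hoping for, by replacing it with a counting argument driven by Sogge's $L^p$ bounds and a scale-invariant local isoperimetric lower bound.
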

   
   \vskip2mm
   In particular, Theorem \ref{t:main} gives a uniform lower bound  in dimension $n=3$.
   
  \vskip2mm
In \cite{Y}, S.-T. Yau conjectured the lower bound
$C \, \sqrt{\lambda}$ in all dimensions.
   This   was proven for surfaces by Br\"uning, \cite{B}, and Yau, independently, and for 
  real analytic metrics by
  Donnelly and Fefferman in \cite{DF}, but remains open in the smooth case.
  The Donnelly-Fefferman argument leads to exponentially decaying  lower bounds in the smooth case; see also \cite{HL}. 
  
    In \cite{M}, Mangoubi considered eigenfunctions on a ball in a manifold and proved   lower bounds for the volume of the subset of the ball where  the function is positive and the subset where it is negative.  Combined with the isoperimetric inequality, one can get the lower bound
 $C \, \lambda^{ \frac{3-n}{2} - \frac{1}{2n}}$ for the measure of the nodal set on the entire manifold.
  
  \vskip2mm
  Recently, Sogge and Zelditch, \cite{SZ}, proved the lower bound $C \, \lambda^{ \frac{7-3n}{8} }$.
Their argument is completely different and is based in part on 
 a beautiful new integral formula relating the $L^1$ norm of $|\nabla u|$ on the nodal set and the $L^1$ norm of $u$ on $M$.

 \vskip1mm
 Finally,  note that some papers use $\lambda^2$ for the eigenvalue (i.e., $\Delta u = - \lambda^2 \, u$); with that convention, our bound is
  $C \, \lambda^{\frac{3-n}{2}}$.
 
 \section{Finding good balls}

Throughout  $M$ will be a fixed closed manifold with Laplace operator $\Delta$ and $u$ will be an eigenfunction with eigenvalue $\lambda$ and $\int_M u^2 = 1$.  We will always assume that $\lambda \geq 1$ since   our interest is in what happens when $\lambda$ goes to infinity.  

\vskip2mm
The first step is to fix a scale $r$ depending on $\lambda$:

\begin{Lem}	\label{l:r}
There exists $a > 0$ so that  $u$ has a zero in every ball of radius $\frac{  a}{3 \,  \sqrt{\lambda }}$.
\end{Lem}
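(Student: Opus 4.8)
The plan is to argue by contradiction, using two standard facts about first Dirichlet eigenvalues $\lambda_1$. First, if $w$ is positive on a bounded domain $\Omega$ and solves $\Delta w=-\lambda w$ there, then $\lambda\le\lambda_1(\Omega)$. Second, for the closed manifold $M$ there is a constant $C=C(M)$ with $\lambda_1(B_r(p))\le C\,r^{-2}$ for every $p\in M$ and every $r>0$. Granting these, suppose the lemma failed for some $a$: then $u$ would be nowhere zero on some closed ball $\bar B_r(p)$ with $r=\frac{a}{3\sqrt\lambda}$, hence, being continuous and sign-definite on a connected set, $u>0$ on $B_r(p)$ after replacing $u$ by $-u$ if needed. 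The two facts then give $\lambda\le\lambda_1(B_r(p))\le C\,r^{-2}=9\,C\,a^{-2}\,\lambda$, i.e. $a^2\le 9\,C$. Thus any $a$ with $a^2>9\,C(M)$ works, and so does every larger value since a larger ball contains a smaller one.

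For the first fact I would take a first Dirichlet eigenfunction $\phi>0$ of $B=B_r(p)$, with $\Delta\phi=-\mu\,\phi$ in $B$, $\phi=0$ on $\partial B$ and $\mu=\lambda_1(B)$, and set $v=\phi/u$; since $u>0$ is smooth on $\bar B$, $v$ is smooth and positive in $B$, and the equation for $\phi$ rewrites as
\[
	\Delta v+\frac{2}{u}\,\langle\nabla u,\nabla v\rangle=(\lambda-\mu)\,v\qquad\text{in }B .
\]
At an interior maximum of $v$ we have $\nabla v=0$, $\Delta v\le 0$ and $v>0$, forcing $\lambda\le\mu$; one only needs $v$ to attain an interior maximum, which holds once $v$ extends by $0$ continuously to $\partial B$. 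To sidestep the possibly irregular boundary of a metric ball altogether, the same bound follows from the ground-state representation $\int_B|\nabla g|^2=\lambda\int_B g^2+\int_B u^2\,|\nabla(g/u)|^2\ge\lambda\int_B g^2$ for $g\in H^1_0(B)$ (Picone's identity), which yields $\lambda_1(B)\ge\lambda$ directly from the variational definition with no hypothesis on $\partial B$.

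For the second fact, for $r$ below some $r_0(M)$ I would compute in geodesic normal coordinates at $p$ and insert the test function $x\mapsto\psi(\dist(x,p)/r)$, with $\psi$ a fixed Lipschitz cutoff on $[0,1]$, into the Rayleigh quotient; scaling gives $\lambda_1(B_r(p))\le C_0(M)\,r^{-2}$, uniform in $p$ by compactness of $M$. For $r\ge r_0$, domain monotonicity gives $\lambda_1(B_r(p))\le\lambda_1(B_{r_0}(p))\le C_0\,r_0^{-2}$, which is absorbed into a larger constant. No step here is genuinely hard; the only things to watch are the uniformity of $C(M)$ over all centers and all radii---handled by compactness together with domain monotonicity---and the weak boundary regularity of geodesic balls, which is exactly why I would run the eigenvalue comparison through the maximum principle for $\phi/u$, or through Picone's identity, rather than through Green's formula on $\partial B$.
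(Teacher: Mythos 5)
Your proposal is correct and takes essentially the same approach as the paper: a Barta-type eigenvalue comparison (a sign-definite solution of $\Delta u=-\lambda u$ on a ball forces $\lambda\le\lambda_1$ of that ball) combined with a cutoff test function giving $\lambda_1(B_r)\le C r^{-2}$, and then a contradiction for $a$ large. The only differences are cosmetic---you supply proofs of Barta's theorem (via the maximum principle applied to $\phi/u$ or via Picone's identity), which the paper simply cites, and you get uniformity of the test-function bound from compactness and normal coordinates where the paper invokes Bishop--Gromov volume comparison.
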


This lemma is standard, but we will include a proof since it is so short.

\begin{proof}
If $u$ does not vanish on $B_{\frac{r}{3}}$, then Barta's theorem implies that the lowest Dirichlet eigenvalue on this ball is at least $\lambda$.  Let $\phi$ be a function that is identically one on $B_{\frac{r}{6}}$ and that cuts off linearly to zero on the annulus $B_{\frac{r}{3}} \setminus B_{\frac{r}{6}}$.  It follows that
\begin{equation}
	\lambda \leq \frac{ \int |\nabla \phi |^2}{\int \phi^2} \leq
	\frac{ 36 r^{-2} \, \Vol ( B_{ \frac{r}{3}}) }{  \Vol( B_{ \frac{r}{6} } )}  \leq \frac{C}{r^2} \, ,
	\end{equation}
	where $C$ comes from the Bishop-Gromov volume comparison (see page $275$ of \cite{G})
	and depends only on $n$, the Ricci curvature of $M$, and an upper bound for $r$.  Since this is impossible for
	$r^2 > C \, \lambda^{-1}$, the lemma follows.
\end{proof}

 From now on,  
  we set $r = a \, \lambda^{ - \frac{1}{2} }$ with $a$ given by Lemma \ref{l:r}.
  
\vskip2mm
Next we use a standard covering argument to decompose the manifold $M$ into small balls of radius $r$.  If $B$ is a ball in $M$, then we write $2B$ for the ball with the same center as $B$ and twice the radius.

\begin{Lem}	\label{l:decomp}
There exists a collection $\{ B_i \}$ of balls of radius $r$ in $M$ so that $M \subset \cup_i B_i$ and each point of $M$ is contained in at most $C_M = C_M (M)$ of the double balls $2B_i$'s.
\end{Lem}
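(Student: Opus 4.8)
The plan is to use a standard Vitali-type covering argument combined with the Bishop--Gromov volume comparison to control the overlap. First I would invoke the Vitali covering lemma: take the collection of all balls of radius $r$ centered at points of $M$, and extract a maximal subcollection $\{ B_i = B_r(p_i) \}$ whose \emph{triple-radius} balls $\{ B_{r/3}(p_i) \}$ are pairwise disjoint (one could equally work with $B_{r/5}$; the exact fraction does not matter). By maximality, every point $q \in M$ lies within distance $2r/3 < r$ of some center $p_i$, so the balls $B_i$ already cover $M$; this gives $M \subset \cup_i B_i$.

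Next I would bound the number of double balls $2B_i$ containing a fixed point $q \in M$. Suppose $q \in 2B_i$ for indices $i$ in some set $I_q$. Then for each such $i$ we have $\dist(q, p_i) < 2r$, and the disjoint balls $B_{r/3}(p_i)$ with $i \in I_q$ are all contained in $B_{3r}(q)$ (since $\dist(p_i, q) < 2r$ and the radius is $r/3$). Summing volumes,
\begin{equation}
	\# I_q \cdot \inf_i \Vol( B_{r/3}(p_i) ) \leq \sum_{i \in I_q} \Vol( B_{r/3}(p_i) ) \leq \Vol( B_{3r}(q) ) \, .
\end{equation}
By the Bishop--Gromov volume comparison (as cited on page $275$ of \cite{G}), applied exactly as in the proof of Lemma \ref{l:r}, the ratio $\Vol( B_{3r}(q) ) / \Vol( B_{r/3}(p_i) )$ is bounded by a constant $C_M$ depending only on $n$, a lower bound for the Ricci curvature of $M$, and an upper bound for $r$ (which we have, since $r = a \lambda^{-1/2} \leq a$ because $\lambda \geq 1$). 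Hence $\# I_q \leq C_M$, which is the claimed bounded-overlap property.

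The only point requiring a little care — and the main (mild) obstacle — is getting the Bishop--Gromov comparison to apply uniformly across all the balls at once: one needs the radii $3r$ to stay below a fixed threshold so that the volume-comparison constant does not blow up, and one needs a uniform lower Ricci bound on the closed manifold $M$, both of which are automatic here since $M$ is fixed and compact and $r \leq a$. With those in hand, $C_M$ depends only on $M$, as required, completing the proof.
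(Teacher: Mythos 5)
Your proof is correct and takes essentially the same approach as the paper: a maximal disjoint packing of smaller concentric balls (the paper uses $B_{r/2}(p_i)$, you use $B_{r/3}(p_i)$; as you note, the fraction is immaterial), then a volume-counting argument inside $B_{3r}(q)$ with Bishop--Gromov controlling the ratio. One small slip of terminology — you call the $B_{r/3}(p_i)$ ``triple-radius'' balls when they have one-third the radius — and the volume comparison step is stated a bit loosely (Bishop--Gromov compares concentric balls, so one should first note $B_{3r}(q) \subset B_{5r}(p_i)$ when $\dist(q,p_i) < 2r$, then compare $B_{5r}(p_i)$ to $B_{r/3}(p_i)$, exactly as the paper does with $B_{5r}(p_i)$ versus $B_{r/2}(p_i)$); but neither affects the substance.
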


\begin{proof}
Choose a maximal disjoint collection of balls $B_{\frac{r}{2}} (p_i)$.  It follows immediately from maximality that the double balls $B_i = 
B_r (p_i)$ cover $M$.

Suppose that $p \in M$ is contained in balls $B_{2r}(p_1) , \dots , B_{2r}(p_k)$.  In particular, the disjoint balls 
$B_{\frac{r}{2}}(p_1) , \dots , B_{\frac{r}{2}}(p_k)$ are all contained in $B_{3r}(p)$ so that
\begin{equation}	\label{e:BG1}
	\sum_{i=1}^k \Vol (B_{\frac{r}{2}}(p_i)) \leq \Vol (B_{3r} (p)) \, .
\end{equation}
On the other hand, for each $i$, the Bishop-Gromov volume comparison gives $C_M$ that depends only on $n$, an upper bound on $r$, and a bound on the Ricci curvature of $M$ so that
\begin{equation}	\label{e:BG2}
	\Vol (B_{3r} (p)) \leq \Vol ( B_{5r}(p_i)) \leq C_M \, \Vol ( B_{\frac{r}{2}}(p_i)) \, .
\end{equation}
Combining \eqr{e:BG1} and \eqr{e:BG2} gives that $k \leq C_M$.  Since   $p$ is arbitrary, the lemma follows.
\end{proof}

 From now on, we will use the balls $B_i$ given by Lemma \ref{l:decomp}.  These balls will be sorted into two groups, depending on how
 fast $u$ is growing from $B_i$ to $2B_i$.   The two groups will be the ones that are $d$-good and the ones that are not.
 Namely, 
 given a constant $d > 1$, we will say that a ball $B_i$ is $d$-good if 
\begin{equation}	\label{e:dgood}
	\int_{2B_i} u^2 \leq 2^d \, \int_{B_i} u^2 \, .
\end{equation}
Let $G_d$ be the union of the $d$-good balls 
\begin{equation}
G_d = \cup \{ B_i \, | \, B_i {\text{ is $d$-good }} \}  \, .
\end{equation}

The next lemma shows  that most of the $L^2$ norm of $u$ comes from $d$-good balls, provided that $d$ is chosen fixed large independently of $\lambda$.

\begin{Lem}	\label{l:mostgood}
There exists $d_M$ depending only on $C_M$ so that if $d \geq d_M$, then
\begin{equation}	\label{e:mostgood}
	\int_{G_d} u^2 \geq \frac{3}{4} \, .
\end{equation}
\end{Lem}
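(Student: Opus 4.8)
The plan is to show that the "bad" balls (those $B_i$ that are not $d$-good) carry only a small fraction of the total $L^2$ mass, so that the complementary good balls must carry at least $\frac34$. The mechanism is that a bad ball loses a definite multiplicative factor $2^{-d}$ of its mass when passing from $2B_i$ down to $B_i$, and the finite overlap of the doubled balls (Lemma \ref{l:decomp}) prevents the masses $\int_{2B_i} u^2$ from accumulating. Since $M = \cup_i B_i$, the sets $B_i \setminus (B_1 \cup \dots \cup B_{i-1})$ partition $M$ up to measure zero, hence $\sum_i \int_{B_i} u^2 \geq \int_M u^2 = 1$; in fact one only needs $\sum_i \int_{B_i} u^2 \geq 1$, which is immediate from the covering property.

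The key steps, in order, are the following. First, enumerate the bad balls and estimate $\sum_{B_i \text{ bad}} \int_{B_i} u^2$. For each bad ball, by definition $\int_{B_i} u^2 < 2^{-d} \int_{2B_i} u^2$, so
\begin{equation}
	\sum_{B_i \text{ bad}} \int_{B_i} u^2 \leq 2^{-d} \sum_{i} \int_{2B_i} u^2 \, .
\end{equation}
Second, bound $\sum_i \int_{2B_i} u^2$ by the finite-overlap property: since each point of $M$ lies in at most $C_M$ of the doubled balls $2B_i$, we have $\sum_i \mathbf{1}_{2B_i} \leq C_M$ pointwise, hence $\sum_i \int_{2B_i} u^2 = \int_M \left( \sum_i \mathbf{1}_{2B_i} \right) u^2 \leq C_M \int_M u^2 = C_M$. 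Third, combine: $\sum_{B_i \text{ bad}} \int_{B_i} u^2 \leq 2^{-d} C_M$. Fourth, choose $d_M$ so that $2^{-d_M} C_M \leq \frac14$, i.e., any $d \geq d_M := \log_2(4 C_M)$ works. Fifth, conclude: since the $B_i$ cover $M$, and using a measurable partition subordinate to the cover (or just noting $\int_{G_d} u^2 \geq \int_M u^2 - \sum_{B_i \text{ bad}} \int_{B_i \setminus G_d} u^2 \geq 1 - \sum_{B_i \text{ bad}} \int_{B_i} u^2$),
\begin{equation}
	\int_{G_d} u^2 \geq 1 - 2^{-d} C_M \geq 1 - \tfrac14 = \tfrac34 \, .
\end{equation}

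I expect the only genuinely delicate point to be the bookkeeping in the last step: one must be careful that $M \setminus G_d$ is covered by the bad balls (true, because the good balls cover $G_d$ and every ball is either good or bad, so $M \setminus G_d \subset \cup_{B_i \text{ bad}} B_i$), and then bound $\int_{M \setminus G_d} u^2 \leq \sum_{B_i \text{ bad}} \int_{B_i} u^2$ by subadditivity of the integral over a (possibly overlapping) cover. Everything else is a one-line application of the definition of $d$-good and of the overlap bound from Lemma \ref{l:decomp}; there is no analytic obstacle, only the choice of the constant $d_M$ depending on $C_M$.
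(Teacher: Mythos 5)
Your proof is correct and follows essentially the same route as the paper: bound the $L^2$ mass on the union of bad balls by the defining inequality $\int_{B_i} u^2 < 2^{-d}\int_{2B_i} u^2$, sum, control $\sum_i \int_{2B_i} u^2 \leq C_M$ using the finite-overlap bound from Lemma \ref{l:decomp}, and then choose $d_M$ with $2^{-d_M} C_M \leq \tfrac14$. The only cosmetic difference is that the paper works with $\Omega = \cup\{B_i : B_i \text{ not } d\text{-good}\}$ and writes $\int_\Omega u^2 \leq \sum_{\text{bad}} \int_{B_i} u^2$, whereas you go directly to the sum; the substance is identical.
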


\begin{proof}
Let $\Omega = \cup \{ B_i \, | \, B_i  {\text{ is not $d$-good }} \}$ be the union of the balls $B_i$ that are {\emph{not}} $d$-good.  
Since the $B_i$'s cover $M$, we have
\begin{equation}	\label{e:omegaG}
	\int_{G_d} u^2 \geq \int_M u^2 - \int_{\Omega} u^2 = 1 - \int_{\Omega} u^2 \, .
\end{equation}
If the ball $B_i$ is not $d$-good, then 
\begin{equation}	\label{e:notgood}
	\int_{2B_i} u^2 >  2^d \, \int_{B_i} u^2 \, .
\end{equation}
Summing \eqr{e:notgood} over the balls that are not $d$-good gives
\begin{equation}	\label{e:notgood2}
	\int_{\Omega} u^2 \leq \sum_{ B_i {\text{ is not $d$-good }} }  \, \, \int_{B_i} u^2 \leq
	\sum_i \, 2^{-d} \, \int_{2B_i} u^2 \leq 2^{-d} \, C_M \, \int_M u^2 = 2^{-d} \, C_M \, ,
\end{equation}
where the second inequality used \eqr{e:notgood} and the third inequality used Lemma \ref{l:decomp}.  If we choose $d_M$ so that
$2^{-d_M} \, C_M = \frac{1}{4}$, then \eqr{e:mostgood} follows by combining \eqr{e:omegaG} and 
\eqr{e:notgood2}.
\end{proof}

To get a lower bound for the number of good balls, we will use the
following $L^p$ bounds for eigenfunctions proven by Sogge in \cite{S1}: 
%\begin{itemize}
%\item If  $p \geq \frac{2(n+1)}{n-1}$, then
%\begin{equation}	\label{p:big}
%	\|  u \|_{L^p} \leq C \, \lambda^{ \frac{ n(p-2) - p}{4p} } \, .
%\end{equation}
%\item
%If instead $p \leq \frac{2(n+1)}{n-1}$, then
%\begin{equation}	\label{p:small}
%	\|u \|_{L^p} \leq C \, \lambda^{ \frac{ (n-1)(p-2)}{8p} } \, .
%\end{equation}
%\end{itemize}
\begin{equation}	\label{p:big}
	\|  u \|_{L^p} \leq
	\begin{cases}
	C \, \lambda^{ \frac{ n(p-2) - p}{4p} }  &
	\text{ if }
 p \geq \frac{2(n+1)}{n-1} \, , \\
 	C \, \lambda^{ \frac{ (n-1)(p-2)}{8p} } &
	\text{ if }
 p \leq \frac{2(n+1)}{n-1} \, .
\end{cases}
\end{equation}
We will only use this estimate for $p= \frac{2(n+1)}{n-1}$ as this gives the sharpest bound; see the remark right after the proof.

\begin{Lem}	\label{l:somegood}
There exists $C$ depending only on $M$ so that 
there are at least $C \, \lambda^{ \frac{n+1}{4} }$ balls that are $d_M$-good.
\end{Lem}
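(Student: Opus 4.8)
The plan is to play the lower bound on the $L^2$ mass carried by good balls (Lemma \ref{l:mostgood}) against the fact that each good ball is tiny (radius $r = a\,\lambda^{-1/2}$) and that $u$ cannot concentrate too much, the latter quantified by Sogge's $L^p$ bound \eqr{p:big}. Write $N$ for the number of $d_M$-good balls and $G = G_{d_M}$ for their union. The Bishop--Gromov comparison (exactly as used in Lemma \ref{l:decomp}) gives $\Vol(B_i) \le C_M\, r^n = C_M\, a^n\, \lambda^{-n/2}$ for every $i$, hence the crude volume bound $\Vol(G) \le N\, C_M\, a^n\, \lambda^{-n/2}$. The point of the proof is to get a \emph{lower} bound on $\Vol(G)$ to feed into this.

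First I would bound $\int_G u^2$ from above by H\"older's inequality, applied to all of $G$ at once rather than ball by ball (applying it on individual balls would be wasteful, since $t\mapsto t^{1-2/p}$ is concave, and would produce a weaker exponent). Taking $p = \tfrac{2(n+1)}{n-1}$, so that $1 - \tfrac2p = \tfrac{2}{n+1}$, H\"older and \eqr{p:big} give
\[
	\tfrac34 \;\le\; \int_G u^2 \;\le\; \Big( \int_M |u|^p \Big)^{2/p} \Vol(G)^{\frac{2}{n+1}} \;\le\; C\, \lambda^{\frac{n-1}{2(n+1)}}\, \Vol(G)^{\frac{2}{n+1}} \, ,
\]
where the first inequality is Lemma \ref{l:mostgood} and the last uses that for $p = \tfrac{2(n+1)}{n-1}$ Sogge's exponent equals $\tfrac{n-1}{4(n+1)}$ (this is precisely the $p$ at which the two branches of \eqr{p:big} agree, which is why it is the sharpest choice). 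Rearranging, $\Vol(G)^{\frac{2}{n+1}} \ge c\, \lambda^{-\frac{n-1}{2(n+1)}}$, i.e. $\Vol(G) \ge c'\, \lambda^{-\frac{n-1}{4}}$ for a constant $c' = c'(M) > 0$.

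Finally, combining this with $\Vol(G) \le N\, C_M\, a^n\, \lambda^{-n/2}$ and solving for $N$ yields
\[
	N \;\ge\; \frac{c'}{C_M\, a^n}\, \lambda^{\frac n2 - \frac{n-1}{4}} \;=\; \frac{c'}{C_M\, a^n}\, \lambda^{\frac{n+1}{4}} \, ,
\]
which is the assertion, with constant depending only on $M$ through $a$, $C_M$, $d_M$, and the Sogge constant.

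I do not expect a genuine obstacle: the entire content is the decision to apply H\"older globally on $G$ and the bookkeeping that makes the three exponents $\tfrac n2$ (from the ball volumes), $\tfrac{n-1}{2(n+1)}$ (from \eqr{p:big}), and $\tfrac{2}{n+1}$ (the H\"older conjugate) combine to exactly $\tfrac{n+1}{4}$. The only things to watch are that $p = \tfrac{2(n+1)}{n-1} > 2$, so the H\"older exponent $1-2/p$ is positive, and that all constants remain $M$-dependent only.
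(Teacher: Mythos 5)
Your proof is correct and is essentially the same as the paper's: H\"older on $G_{d_M}$ with $p=\tfrac{2(n+1)}{n-1}$, Sogge's $L^p$ bound, Lemma \ref{l:mostgood} to get $\Vol(G_{d_M})\gtrsim\lambda^{\frac{1-n}{4}}$, then divide by the per-ball volume $\lesssim\lambda^{-n/2}$. The paper also singles out the same endpoint $p$ as optimal (in a remark), so there is no meaningful divergence.
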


\begin{proof}
Let $N$ denote the number of $d_M$-good balls $B_i$.  Given any $p>2$, 
the $L^2$ norm of $u$ on a set $G$ is bounded by
\begin{equation}	\label{e:basic}
	\int_{G} u^2 \leq \left( \int_{G} 1 \right)^{ \frac{p-2}{p} }\,
	\left( \int_{G} (u^2)^{\frac{p}{2}} \right)^{\frac{2}{p}}
	\leq  \left( \Vol (G) \right)^{ \frac{ p-2}{p}} \, \| u \|_{L^p}^2  \, .
\end{equation}
Raising both sides to the $\frac{p}{p-2}$ power, bringing the $L^p$ norm to the other side, and setting $G=G_{d_M}$ gives
\begin{equation}	\label{e:basic1}
	  \left(  \frac{3}{4} \right)^{ \frac{p}{p-2} }  \,  \, \| u \|_{L^p}^{-\frac{2p}{p-2} } 
	 	\leq
	  \left(  \int_{G_{d_M}} u^2 \right)^{ \frac{p}{p-2} }  \,  \, \| u \|_{L^p}^{-\frac{2p}{p-2} } 
	 	\leq    \Vol (G_{d_M})  \, ,
\end{equation}
where the first inequality used  Lemma \ref{l:mostgood}.  Thus, for any $p\leq \frac{2(n+1)}{n-1}$, the $L^p$ eigenfunction bound \eqr{p:big} gives
\begin{equation}	\label{e:basic1a}
	  C_p \, \lambda^{ \frac{1-n}{4} } =  C_p \, \left( \lambda^{\frac{(n-1)(p-2)}{8p}} \right)^{-\frac{2p}{p-2} } 
	 	\leq    \Vol (G_{d_M})  \, ,
\end{equation}
where the constant $C_p$ depends on $p$ and $M$ but not on $\lambda$.

Since  $\Vol ( B_i) \leq C_M' \, r^n = C' \, \lambda^{ - \frac{n}{2}}$  for  $C_M'$ and $C'$ depending on $M$ (in fact, just on $n$, a lower bound for the Ricci curvature, and an upper bound on $r$), we get that
\begin{equation}
 C \, \lambda^{ \frac{1-n}{4} }  \leq \Vol (G_d) \leq N \,   C' \, \lambda^{- \frac{n}{2} } \, ,
 \end{equation}
 giving the lemma.
 \end{proof}
 
 \begin{Rem}	
Setting $p= \frac{2(n+1)}{n-1}$ gives the sharpest bound in Lemma \ref{l:somegood}.  
To see this, suppose that $p\geq \frac{2(n+1)}{n-1}$ and
use the $L^p$ eigenfunction bound \eqr{p:big} in \eqr{e:basic1} to get
\begin{equation}	\label{e:basic1b}
	  C_p \, \lambda^{ \frac{p}{2(p-2)} - \frac{n}{2} } =  C_p \, \left( \lambda^{\frac{ n(p-2) - p}{4p}} \right)^{-\frac{2p}{p-2} } 
	 	\leq    \Vol (G_{d_M})  \, ,
\end{equation}
where the constant $C_p$ depends on $p$ and $M$ but not on $\lambda$.   Since $\frac{p}{2(p-2)}$ is monotone decreasing in $p$, the bound \eqr{e:basic1b} is sharpest at the endpoint $p=\frac{2(n+1)}{n-1}$.
 \end{Rem}
 
 \begin{Rem}
 The lower bound \eqr{e:basic1a} for the volume  where   $\int u^2$ concentrates 
 is sharp.  There are spherical harmonics  concentrating on a $\lambda^{ - \frac{1}{4} }$  neighborhood of a geodesic; see \cite{S2}. 
 \end{Rem}
 
 \begin{Rem}
 If we used above the Sobolev inequality (page $89$ in \cite{ScY}) instead of the $L^p$-bounds of Sogge, then we would get the following lower bound for the volume of $G$ where $p=\frac{2n}{n-2}$
 \begin{align}	\label{e:basicremark}
	\int_{G} u^2 &\leq \left( \int_{G} 1 \right)^{ \frac{p-2}{p} }\,
	\left( \int_{G} (u^2)^{\frac{p}{2}} \right)^{\frac{2}{p}}
	\leq  \left( \Vol (G) \right)^{ \frac{ p-2}{p}} \, \| u \|_{L^p}^2\notag\\
	&\leq   \left( \Vol (G) \right)^{ \frac{ p-2}{p}} \, \| \nabla u \|_{L^2}^2=\left( \Vol (G) \right)^{ \frac{ p-2}{p}} \,\lambda\, .
\end{align}
Note that the Sobolev inequality holds since $u$ is an eigenfunction so $\int_M u=0$.  
Since $\frac{p-2}{p}=1-\frac{2}{p}=\frac{2}{n}$, we get
\begin{equation}
\int_{G} u^2 \leq \left(\Vol (G)\right)^{\frac{2}{n}}\,\lambda\, ,
\end{equation}
which gives only that the number of good balls is bounded below (independent of $\lambda$).  This leads to the lower bound $C \, \lambda^{ \frac{1-n}{2} }$ for the measure of the nodal set.
 \end{Rem}

 \section{Local estimates for the nodal set}

 The main theorem will follow by combining the lower bound on the number of good balls with a  
  lower bound for the nodal volume in each good ball.  This local estimate for the nodal set is  based on  the isoperimetric inequality together with estimates for the sets where the function is positive and negative; this approach comes from \cite{DF} where they prove a similar local estimate (cf. also \cite{HL}).
 
 \vskip1mm
 The  local lower bound for nodal volume is the following:
 
 \begin{Pro}	\label{p:local}
 Given  constants $d > 1$ and $\rho >1$, there exist $\mu > 0$ and $\bar{\lambda}$ so that if
 $\Delta u = - \lambda \, u$ on $B_r(p) \subset M$ with $r \leq \rho \, \lambda^{- \frac{1}{2} }$, $\lambda \geq \bar{\lambda}$,
 $u$ vanishes somewhere in $B_{\frac{r}{3}} (p)$, and
 \begin{equation}	\label{e:CD}
 	\int_{B_{2r}(p)} u^2 \leq 2^d \, \int_{B_r(p)} u^2 \, , 
\end{equation}
then 
\begin{equation}	\label{e:locH0}
	\cH^{n-1} \left( B_r(p) \cap \{ u = 0 \} \right) \geq \mu \, r^{n-1} \, .
\end{equation}
\end{Pro}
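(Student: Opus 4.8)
The plan is to rescale to unit size, use the doubling hypothesis \eqref{e:CD} together with interior elliptic estimates to obtain a compactness statement for the relevant class of eigenfunctions, show by a maximum principle argument that $u$ changes sign with a definite amount of measure in a fixed fraction of the ball around a zero, and then apply the relative isoperimetric inequality on that small ball. First, since we may choose $\bar{\lambda}$ and are only claiming the statement for $\lambda$ large, we rescale the metric on $B_r(p)$ by $r^{-2}$: now $\Delta u = -\bar{\lambda}_0 \, u$ on the unit ball $B_1(p)$ with $\bar{\lambda}_0 = r^2 \lambda \le \rho^2$, the rescaled metrics (over all admissible data) form a $C^\infty$-precompact family on $B_1(p)$ --- in fact they are uniformly $C^\infty$-close to the Euclidean metric once $\bar{\lambda}$ is large enough in terms of $M$ --- and both \eqref{e:CD} and the desired conclusion \eqref{e:locH0} are scale invariant up to the expected power of $r$, so it suffices to prove $\cH^{n-1}(B_1(p) \cap \{u=0\}) \ge \mu$. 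Normalizing $\int_{B_1(p)} u^2 = 1$, the hypothesis \eqref{e:CD} gives $\int_{B_2(p)} u^2 \le 2^d$, and interior elliptic estimates then bound $\|u\|_{C^2(B_{3/2}(p))}$ by a constant depending only on $n$, $d$, $\rho$, $M$.

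The heart of the matter is the following claim: there are $\epsilon > 0$ and $c_0 > 0$, depending only on $n$, $d$, $\rho$, $M$, so that for every $q \in B_{1/3}(p)$ with $u(q)=0$ one has $\Vol(\{u>0\} \cap B_\epsilon(q)) \ge c_0$ and $\Vol(\{u<0\} \cap B_\epsilon(q)) \ge c_0$. I would prove this by contradiction and compactness. Fix a small $\epsilon$, say $\epsilon = \tfrac{1}{12}$ so that $B_\epsilon(q) \subset B_{3/2}(p)$, and suppose the infimum of $\Vol(\{u>0\} \cap B_\epsilon(q))$ over all admissible configurations were $0$. Along a minimizing sequence the rescaled metrics subconverge in $C^\infty$, the eigenvalues stay bounded, and $\|u\|_{C^2(B_{3/2}(p))}$ stays bounded, so one extracts a $C^2_{\mathrm{loc}}$ limit $u_\infty$ solving $\Delta_{g_\infty} u_\infty = -\bar{\lambda}_\infty u_\infty$ on a ball $B_{3/2}(p_\infty)$ with $\bar{\lambda}_\infty \ge 0$, vanishing at the limit point $q_\infty \in \overline{B_{1/3}(p_\infty)}$, with $\int_{B_1(p_\infty)} u_\infty^2 = 1$ (so $u_\infty \not\equiv 0$), and --- since $u_\infty$ is continuous and $\{u_\infty > 0\}$ is open --- with $u_\infty \le 0$ on $B_\epsilon(q_\infty)$. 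But then $\Delta_{g_\infty} u_\infty = -\bar{\lambda}_\infty u_\infty \ge 0$ on $B_\epsilon(q_\infty)$, so $u_\infty$ is subharmonic there and attains its maximum value $0$ at the interior point $q_\infty$; by the strong maximum principle $u_\infty \equiv 0$ on $B_\epsilon(q_\infty)$, and unique continuation then forces $u_\infty \equiv 0$ on $B_1(p_\infty)$, contradicting $\int_{B_1(p_\infty)} u_\infty^2 = 1$. Running the same argument with $\{u<0\}$ and taking the smaller constant proves the claim. Note that the doubling hypothesis \eqref{e:CD} enters precisely here: it is what keeps the blow-up limit $u_\infty$ nontrivial.

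Finally, I would observe that $\{u>0\} \cap B_\epsilon(q)$ has finite perimeter in $B_\epsilon(q)$ --- if $\cH^{n-1}(\{u=0\} \cap B_\epsilon(q))$ were infinite there would be nothing to prove --- that its reduced boundary inside $B_\epsilon(q)$ lies in the nodal set $\{u=0\}$, and that $\{u=0\}$ is Lebesgue-null, so $\{u>0\} \cap B_\epsilon(q)$ and $\{u<0\} \cap B_\epsilon(q)$ partition $B_\epsilon(q)$ up to a null set. The relative isoperimetric inequality on the small ball $B_\epsilon(q)$, whose constant is uniform under the bounded geometry, then gives
\begin{equation*}
	\cH^{n-1}(\{u=0\} \cap B_\epsilon(q)) \ge c_n \, \min\!\left( \Vol(\{u>0\} \cap B_\epsilon(q)), \, \Vol(\{u<0\} \cap B_\epsilon(q)) \right)^{\frac{n-1}{n}} \ge c_n \, c_0^{\frac{n-1}{n}} \, ,
\end{equation*}
and since $B_\epsilon(q) \subset B_1(p)$ this yields $\cH^{n-1}(\{u=0\} \cap B_1(p)) \ge \mu$ with $\mu = c_n \, c_0^{(n-1)/n}$; undoing the rescaling gives \eqref{e:locH0}. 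The main obstacle is the claim of the second paragraph --- producing a definite amount of each sign near the zero, uniformly over all data --- and the point to watch throughout is that it is the doubling hypothesis that prevents the compactness limit from degenerating to zero.
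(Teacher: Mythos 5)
Your proof is correct, but it takes a genuinely different route from the paper's. The paper argues quantitatively and explicitly: it first proves an averaging lemma (Lemma~\ref{l:avg}) asserting that if $u(q)=0$ then $\bigl|\int_{B}u\bigr|\le\frac{1}{3}\int_{B}|u|$, via a spherical-average ODE together with the Li--Schoen mean value inequality; it then combines the doubling hypothesis \eqr{e:CD} with the mean value inequality to get a reverse H\"older bound $\int u^2\le C_02^d r^{-n}\bigl(\int|u|\bigr)^2$, deduces from Lemma~\ref{l:avg} that $\int u^+,\int u^-\ge\frac13\int|u|$, and closes with Cauchy--Schwarz to produce scale-invariant volume lower bounds $\Vol(B^\pm)\ge r^n/(9C_02^d)$ before invoking the isoperimetric inequality. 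You instead rescale to the unit ball, use elliptic estimates to get $C^2$ precompactness, and run a proof by contradiction: if the positivity (or negativity) set near a zero could have vanishingly small measure, a $C^2_{\mathrm{loc}}$ limit $u_\infty$ would be a nontrivial eigenfunction that is nonpositive on a small ball and vanishes at an interior point, forcing $u_\infty\equiv0$ there by the strong maximum principle and then everywhere by unique continuation, contradicting $\int_{B_1}u_\infty^2=1$. Both proofs correctly identify the doubling hypothesis as the mechanism that prevents degeneration, and both finish with the (relative) isoperimetric inequality. The trade-off: the paper's argument is effective --- the constants $\mu$ and $\bar\lambda$ are in principle computable, and it avoids unique continuation --- whereas your compactness argument is shorter and conceptually cleaner, dispenses with Lemma~\ref{l:avg} entirely, but produces non-effective constants and relies on the (deeper, though standard) unique continuation property for second-order elliptic equations.

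Two small points worth tightening if you write this up. First, the proposition as stated says the equation holds on $B_r(p)$, but both your elliptic estimate on $B_{3/2}(p)$ and the paper's mean value inequality on $B_{2r}(p)$ really use the equation on the doubled ball; this is harmless in the application (where $u$ is a global eigenfunction) but should be made explicit. Second, when you extract the limit, note that $q_j\to q_\infty$ may land on $\partial B_{1/3}(p_\infty)$, so you should check $B_\epsilon(q_\infty)$ stays compactly inside the region where you have $C^2$ convergence; your choice $\epsilon=\tfrac{1}{12}$ handles this since $\tfrac{1}{3}+\tfrac{1}{12}<1<\tfrac{3}{2}$.
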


\vskip2mm
Given this proposition, we can now prove the main theorem:

\begin{proof}
(of Theorem \ref{t:main}).  We can assume that $\lambda$ is large.  By Lemma \ref{l:somegood}, 
 there are at least $C \, \lambda^{ \frac{n+1}{4} }$ balls that are $d_M$-good.  
 
If $B_i$ is any of the balls in the covering, then   Lemma \ref{l:r} implies that $u$ vanishes somewhere in $\frac{1}{3} \, B_i$.  
Thus, if $B_i$ is $d_M$-good, then 
 Proposition \ref{p:local} (with $d=d_M$ and $r= a \, \lambda^{- \frac{1}{2} }$) gives  
 \begin{equation}
 	\cH^{n-1} \left( B_i \cap \{ u = 0 \} \right) \geq C_1 \,\lambda^{ - \frac{n-1}{2}  } \, , 
 \end{equation}
 where $C_1$ depends only on $M$.  Here, we have used that we can assume that $\lambda$ is large.
 
 Combining these two facts and using the covering bound from Lemma \ref{l:decomp} gives
 \begin{equation}
 		C \, \lambda^{ \frac{n+1}{4} } \,  C_1 \,\lambda^{ - \frac{n-1}{2}  } \leq 
		\sum_{B_i {\text{ $d_M$-good}}} \, \cH^{n-1} \left( B_i \cap \{ u = 0 \} \right) 
		\leq C_m \, \cH^{n-1} \left(   \{ u = 0 \} \right) \, ,
 \end{equation}
 and the theorem follows.
\end{proof}

It   only remains to prove the local estimate in Proposition \ref{p:local}.

 \subsection{Proof of the local lower bound}

 In Euclidean space, if an eigenfunction $u$ vanishes at a point $p$, then it's average is zero on every ball with center at $p$.  We will use the following generalization of this:
 
 \begin{Lem}	\label{l:avg}
 There exists $\bar{R} > 0$ depending on $M$ 
 so that if $r \leq \bar{R}$ and $u(p)=0$, then
 \begin{equation}	
 	\left| \int_{B_r(p)} u \right| \leq \frac{1}{3} \, \int_{ B_r(p) } |u| \, .
\end{equation}
 \end{Lem}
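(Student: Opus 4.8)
The plan is to reduce the statement to the Euclidean model, where it is exact: if $\Delta u=-\lambda u$ on a Euclidean ball and $u(p)=0$, then the spherical mean $|\partial B_s(p)|^{-1}\int_{\partial B_s(p)}u$ equals $u(p)$ times a normalized Bessel function, hence vanishes identically, so that $\int_{B_s(p)}u=0$ for every $s$. On a general manifold this identity acquires a curvature error of relative size $O(s)$, and the content of the lemma is that this error can be made smaller than $\frac13$ by taking $r\le\bar{R}$.

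Concretely, I would fix geodesic polar coordinates $(s,\omega)$ centered at $p$, write the volume element as $J(s,\omega)\,ds\,d\omega$, and put $f(s)=\int_{B_s(p)}u$ and $\Theta(s)=\int_{\partial B_s(p)}u$, so $f'=\Theta$. Differentiating $\Theta$ and using $\partial_s\log J=\Delta\,\dist(p,\cdot)=H$ (the mean curvature of the geodesic sphere $\partial B_s(p)$) together with $\int_{\partial B_s(p)}\partial_\nu u=\int_{B_s(p)}\Delta u=-\lambda f(s)$ gives
\[
	\Theta'(s)=\frac{n-1}{s}\,\Theta(s)-\lambda\, f(s)+\mathcal{E}(s)\, ,\qquad \mathcal{E}(s)=\int_{\partial B_s(p)}u\,\Big(H-\frac{n-1}{s}\Big)\, .
\]
By the Riccati comparison, $|H-\frac{n-1}{s}|\le C\, s$ for $s\le\bar{R}$, with $C$ depending only on a curvature bound for $M$, so $|\mathcal{E}(s)|\le C\, s\int_{\partial B_s(p)}|u|$; that is, $\mathcal{E}$ is small, by a factor $s$, compared to the mass of $u$ on $\partial B_s(p)$. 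This system is the manifold counterpart of the Euclidean identity, which is the case $\mathcal{E}\equiv 0$, $H\equiv\frac{n-1}{s}$.

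Next I would integrate the system by variation of parameters. Multiplying the second equation by $s^{1-n}$ gives $(s^{1-n}\Theta)'=s^{1-n}(\mathcal{E}-\lambda f)$; since $u(p)=0$ and $u$ is smooth, $u=O(\dist(p,\cdot))$ near $p$ and hence $s^{1-n}\Theta(s)\to 0$ as $s\to 0$, so there is no boundary term and
\[
	\int_{B_r(p)}u=f(r)=\int_0^r\frac{r^n-t^n}{n}\, t^{1-n}\big(\mathcal{E}(t)-\lambda\, f(t)\big)\,dt\, .
\]
This is a Volterra integral equation for $f$ on $[0,r]$; in the regime of Proposition \ref{p:local}, where $r\le\rho\,\lambda^{-1/2}$ and so $\lambda r^2$ is bounded, it is solved by iteration and yields $|f(r)|\le C'\, r\int_{B_r(p)}|u|$ with $C'$ depending only on $M$. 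Choosing $\bar{R}$ with $C'\bar{R}\le\frac13$ then finishes the proof.

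The step I expect to be the crux is exactly this last one: extracting $|f(r)|\le C'\, r\int_{B_r(p)}|u|$ with $C'$ independent of $u$, hence of $\lambda$. The kernel $t^{1-n}(r^n-t^n)$ is not integrable at $t=0$, so one cannot estimate it crudely against $\int_{B_r(p)}|u|$; one has to use genuinely that $u(p)=0$ — which selects the regular branch of the underlying Bessel equation, the branch on which the unperturbed solution vanishes identically — and keep track of the rate at which $\mathcal{E}$ and $f$ vanish at the center, together with the bound on $\lambda r^2$ needed to run the iteration. The remaining ingredients — the polar-coordinate computation, the comparison estimate for $H$, and the final choice of $\bar{R}$ — are routine.
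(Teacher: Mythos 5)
Your outline matches the paper's proof in its essential structure: both reduce the lemma to a differential/integral relation for radial averages of $u$, use the Hessian (Riccati) comparison to bound the curvature error in $\Delta\,\dist$, exploit $u(p)=0$ to kill the singular homogeneous solution, and exploit $\lambda r^2\lesssim 1$ to absorb the $\lambda f$ term. The paper works with the spherical average $I_u(s)=s^{1-n}\int_{\partial B_s}u$, runs a Gronwall argument on $f(s)=\max_{t\le s}|I_u(t)|$ with integrating factor $\e^{-\lambda rs/n}$, and then recovers $\int_{B_r}u$ by coarea; you work with $f(s)=\int_{B_s}u$ directly and set up the equivalent Volterra equation. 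These are two dressings of the same computation and both are fine.

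The genuine gap is in the step you label ``routine.'' After the iteration you arrive (in either formulation) at a bound of the schematic form
\begin{equation*}
\Bigl|\int_{B_r(p)} u\Bigr| \;\lesssim\; h(r)\, r^{n}\int_0^r I_{|u|}(t)\,dt \quad\text{where}\quad I_{|u|}(t)=t^{1-n}\int_{\partial B_t(p)}|u| \, ,
\end{equation*}
with $h(r)\to 0$ from the comparison estimate. To conclude $\bigl|\int_{B_r}u\bigr|\le\tfrac13\int_{B_r}|u|$ one must show $\int_0^r I_{|u|}(t)\,dt\lesssim r^{1-n}\int_{B_r}|u|$, and this is \emph{not} routine: the factor $t^{1-n}$ is not integrable near $0$ for $n\ge 2$, and a crude $L^\infty$ bound on $u$ loses a power of $r$. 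The paper closes this by splitting the $t$-integral at $r/2$ and, on $[0,r/2]$, invoking the Li--Schoen mean value inequality $\sup_{B_{r/2}}|u|\le C\,r^{-n}\int_{B_r}|u|$ (valid precisely because $\lambda r^2$ is bounded). Your write-up never names this ingredient and lumps it under ``the final choice of $\bar R$.'' Without it, the estimate $|f(r)|\le C'r\int_{B_r}|u|$ that you assert does not follow; so you should surface the mean value inequality of \cite{LiSc} explicitly and carry out the split near the center. With that added, your argument is complete and is essentially the paper's proof in integral-equation form.
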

 
 \begin{proof}
 Given a function $v$, define the  spherical average
 \begin{equation}
 	I_v (s) = s^{1-n} \, \int_{\partial B_s (p)} v \, .
 \end{equation}
Let $d$ denote the distance in $M$ to $p$.  Differentiating $I_v (s)$ gives
 \begin{align}
 	I_v ' (s) &= 	s^{1-n} \, \int_{\partial B_s (p)} \left[ \frac{\partial v}{\partial r} 
		+ v \, \left( \Delta d + \frac{1-n}{s} \right) \right]  \notag \\
		&=  s^{1-n} \, \int_{B_s (p)} \Delta v + s^{1-n} \, \int_{\partial B_s (p)} 
		v \, \left( \Delta d + \frac{1-n}{s} \right)  \, ,
 \end{align}
 where the second equality used the divergence theorem.  On $\RR^n$, 
 $  \Delta d + \frac{1-n}{d}   = 0$.  On   $M$, as long as $d$ is small (depending on  $\sup |K_M|$ and lower bound for the injectivity radius of $M$), the Hessian comparison theorem (see, e.g., page $4$ in \cite{ScY}) gives
 \begin{equation}	\label{e:hessian}
 	\left| \Delta d + \frac{1-n}{d} \right| \leq h (d) \, ,
 \end{equation}
 where the function $h:[0,\infty) \to \RR$ is continuous, monotone non-decreasing, and satisfies $h(0)=0$.  Thus, we see that
 \begin{align}	\label{e:diffIu}
 	\left| I'_u (s)\right|  \leq  \lambda \, s^{1-n} \, \left| \int_{B_s (p)}  u \right|+ 
	s^{1-n} \, h(s) \, \int_{\partial B_s (p)} 
		|u| % \notag \\
		\leq  \frac{\lambda \, s}{n} \,   \max_{t\leq s} \, |I_u (t)| + 
	 h(s) \, I_{ 
		|u|}(s)
		 \, .
 \end{align}
 Motivated by this, define the function $f$ by
 \begin{equation}
 	f(s) =  \max_{t\leq s} \, |I_u (t)| \, .
\end{equation}
 Observe that $f$ is automatically monotone non-decreasing,   $f(0)=0$ (since $u(p)=0$), and $f$ is Lipschitz with
 \begin{equation}
 	f'(s) \leq \left| I'_u (s)\right| \leq  \frac{\lambda \, s}{n} \,   f(s) + h(s) \, I_{ 
		|u|}(s)
	 \, ,
 \end{equation}
 where this inequality is understood in the sense of the limsup of forward difference quotients.  In particular, we have for $s \leq r$ that
  \begin{equation}
 	\frac{d}{ds} \, \left( f(s) \, \e^{ - \frac{\lambda \, r \, s}{n} } \right) \leq   h(s) \, I_{ 
		|u|}(s)
	 \, .
 \end{equation}
Using that $f(0)=0$ and integrating this gives for each $t \leq r$ that
  \begin{equation}
 	 f(t) \leq  \e^{  \frac{\lambda \,r \, t}{n} }  \,   h(t) \, \int_0^t I_{ 
		|u|}(s) \, ds \leq  \e^{  \frac{\lambda \, r^2}{n} }  \,   h(r) \, \int_0^t I_{ 
		|u|}(s) \, ds
	 \, .
 \end{equation}
 By the coarea formula, we have
\begin{equation}
 	\left| \int_{B_r(p)} u \right| = \left| \int_0^r t^{n-1} \,  I_u (t)   \, dt \, \, \right|
	\leq \frac{r^n}{n} \, f(r) \leq \,
	\e^{  \frac{\lambda \, r^2}{n} }  \,   h(r) \,  \frac{r^n}{n} \, \int_0^r I_{ 
		|u|}(t) \, dt \, .
	\end{equation}
	Observe that $ \e^{  \frac{\lambda \, r^2}{n} }$ is bounded since $r$ is on the order of $\lambda^{-1/2}$ and we can make  $h(r) $ as small as we like by taking $r$ small enough (independent of $\lambda$).  Thus, to finish off the proof, we need only bound $r^n \, \int_0^r I_{ 
		|u|}(t) \, dt$ by a fixed multiple of $\int_{B_r(p)} |u|$.  To do this, observe that, since  
		$r$ is proportional to  $\lambda^{-1/2}$,  the mean value inequality (theorem $1.2$ in 
		\cite{LiSc}) gives
		\begin{equation}
			\sup_{B_{r/2}(p)} |u| \leq C \, r^{-n} \, \int_{B_r(p)} |u| \, ,
		\end{equation}
		so we get
	\begin{align}
		  \int_0^r I_{ 
		|u|}(t) \, dt  &=    \int_0^{\frac{r}{2} }  \left( t^{1-n} \, \int_{\partial B_t (p)} |u| \right) \, dt 
		+ \int_{\frac{r}{2}}^r   \left( t^{1-n} \, \int_{\partial B_t (p)} |u| \right) \, dt \notag \\
		&\leq  \frac{r}{2} \, \left( \sup_{B_{r/2}(p)} |u| \right) \, \sup_{t\leq \frac{r}{2}} \,
		\frac{ \Vol (\partial B_t (p))}{t^{n-1}}   + \left( \frac{2}{r} \right)^{n-1} \, 
		  \int_{\frac{r}{2}}^r     \int_{\partial B_t (p)} |u|   \, dt \notag \\
		&\leq C_1 \, r^{1-n} \, \int_{B_r(p)} |u|
		\, .
	\end{align} 
	Thus, since $r\leq \bar{R}$, we get the desired bound and the lemma follows.
 
 \end{proof}
 
 We are now ready to prove the local lower bound:

\begin{proof}
(of Proposition \ref{p:local}).  Let $q \in B_{\frac{r}{3}}(p)$ be a point with $u(q)=0$.  Note that
\begin{equation}
	B_{r}(p) \subset B_{\frac{4r}{3}}(q) {\text{ and }}
	B_{\frac{5r}{3}}(q) \subset B_{2r}(p) \, .
\end{equation}
Since the scale $r$ is proportional to $\lambda^{- \frac{1}{2} }$, we can apply the meanvalue inequality (theorem $1.2$ in 
		\cite{LiSc}) to $u^2$ to get
\begin{equation}	\label{e:sup}
	\sup_{B_{\frac{4r}{3}}(q)} u^2 \leq C_0 \, r^{-n} \, \int_{B_{2r}(p)} u^2 
	\leq C_0 \, 2^d \, r^{-n} \, \int_{B_{r}(p)}  u^2
	\leq C_0 \, 2^d \, r^{-n} \, \int_{B_{\frac{4r}{3}}(q)}  u^2 \, ,
\end{equation}
where the second inequality used
 \eqr{e:CD} and $C_0$ depends only on $n$,   the geometry of $M$, and
an upper bound for $r^2 \, \lambda$   (all of which are fixed).
 
  From now on, all integrals will be over $B_{\frac{4r}{3}}(q)$ unless stated otherwise. Using \eqr{e:sup}, we get the ``reverse H\"older'' inequality
  \begin{equation}
  	\left( \int u^2 \right)^2  \leq \sup u^2 \, \left( \int |u| \right)^2 \leq 
	 C_0 \, 2^d \, r^{-n} \, \left( \int  u^2 \right) \,  \left( \int |u| \right)^2 \, ,
  \end{equation}
  which simplifies to
  \begin{equation}	\label{e:rH}
  	\int u^2 \leq  C_0 \, 2^d \, r^{-n} \,  \left( \int |u| \right)^2 \, .
  \end{equation}

 Let $u^+$ be the positive part of $u$, i.e., $u^+ (x) = \max \{ u(x) , 0 \}$, and let $u^- = u^+ - u$ be the negative part of $u$.
 It follows from Lemma \ref{l:avg} that
 \begin{equation}
 	\int u^+ \geq \frac{1}{3} \, \int |u| {\text{ and }} 
		\int u^- \geq \frac{1}{3} \, \int |u| \, .
\end{equation}
Let $B^+$ denote $B_{\frac{4r}{3}}(q) \cap \{ u > 0 \}$ and  $B^-$ denote $B_{\frac{4r}{3}}(q) \cap \{ u < 0 \}$.
Thus, applying Cauchy-Schwarz to $u^+$ gives
\begin{equation}
	\frac{1}{9} \, \left( \int |u| \right)^2 \leq \left( \int u^+ \right)^2 \leq \Vol ( B^+ )  \, \int u^2 \leq \Vol ( B^+ ) \, C_0 \, 2^d \, r^{-n} \, 
	 \left( \int |u| \right)^2 \, ,
\end{equation}
where the last equality used \eqr{e:rH}.  Dividing through by the square of the $L^1$ norm of $u$ gives a scale-invariant lower bound for the volume of $B^+$
\begin{equation}
	\frac{r^n}{9 \, C_0 \, 2^d }  \leq  \Vol ( B^+ )  \, .
\end{equation}
The same argument applies to $u^-$ to give the same lower bound for the volume of $B^-$.  
Together, these allow us to apply the isoperimetric inequality to get the lower bound for the measure of the nodal set in $B$, thus completing the proof of the proposition.
 \end{proof}

\end{document}